\newtheorem{thm}{Theorem}[section]
\newtheorem{lem}[thm]{Lemma}
\newtheorem{prop}[thm]{Proposition}
\theoremstyle{definition}
\newtheorem{question}[thm]{Question}
\newtheorem{rem}[thm]{Remark}
\newtheorem{ex}[thm]{Example}
\newcommand{\bP}{\mathbb{P}}
\newcommand{\F}{\mathbb{F}}
\newcommand{\Fq}{\mathbb{F}_q}
	\def\MR#1{}
\begin{document}

\title{Proportion of blocking curves in a pencil}

\author{Shamil Asgarli}
\address{Department of Mathematics and Computer Science \\ Santa Clara University \\ 500 El Camino Real \\ USA 95053}
\email{sasgarli@scu.edu}

\author{Dragos Ghioca}
\address{Department of Mathematics \\ University of British Columbia \\ 1984 Mathematics Road \\ Canada V6T 1Z2}
\email{dghioca@math.ubc.ca}

\author{Chi Hoi Yip}
\address{School of Mathematics\\ Georgia Institute of Technology\\ Atlanta, GA 30332\\ United States}
\email{cyip30@gatech.edu}

\subjclass[2020]{Primary: 14H50, 51E21; Secondary: 14C21, 14N05, 14G15, 51E20}
\keywords{Plane curves, blocking sets, pencil, finite field}

\begin{abstract}
Let $\mathcal{L}$ be a pencil of plane curves defined over $\mathbb{F}_q$ with no $\F_q$-points in its base locus. We investigate the number of curves in $\mathcal{L}$ whose $\F_q$-points form a blocking set. When the degree of the pencil is allowed to grow with respect to $q$, we translate the geometric problem into a purely combinatorial problem about disjoint blocking sets. We also study the same problem when the degree of the pencil is fixed.
\end{abstract}

\maketitle

\section{Introduction}

Throughout the paper, $p$ denotes a prime, $q$ denotes a power of $p$, and $\F_q$ denotes the finite field with $q$ elements. Recall that a set of points $B\subseteq \bP^2(\F_q)$ is a \emph{blocking set} if every $\F_q$-line meets $B$. For example, a union of $q+1$ distinct $\F_q$-points on a line forms a blocking set. A blocking set $B$ is \emph{trivial} if it contains all the $q+1$ points of a line, and is \emph{nontrivial} if it is not a trivial blocking set. Blocking sets have been studied extensively in finite geometry and design theory.

Inspired by the rich interaction between finite geometry and algebraic curves \cite{SS98}, the concept of blocking curves was formally introduced in \cite{AGY23}. Given a projective plane curve $C\subset\mathbb{P}^2$ defined over $\F_q$, recall that $C(\F_q)$ denotes the set of $\F_q$-rational points on $C$. We say that $C$ is a \emph{blocking curve} if $C(\F_q)$ is a blocking set; otherwise, it is \emph{nonblocking}. Moreover, $C$ is \emph{nontrivially blocking} if $C(\F_q)$ is a nontrivial blocking set.

In our previous papers \cite{AGY23} and \cite{AGY22b}, we showed that irreducible blocking curves of low degree $d\geq 2$ (namely, satisfying $d^6 < q$) do not exist, and that blocking curves of high degree are rare. In other words, our past results suggest that a random curve over $\F_q$ is very likely to be nonblocking. In \cite{AGY23-FFA-pencils}, we also proved that there are pencils of plane curves in which \emph{every} $\mathbb{F}_q$-member is nonblocking. These results rely on combinatorial properties of blocking sets as well as tools from algebraic geometry and arithmetic statistics. The primary purpose of this paper is to understand a refined distribution of nonblocking curves by asking the following question:

\begin{question}\label{quest:main} Let $\mathcal{L}=\langle F, G\rangle$ be a pencil of plane curves such that $\{F=0\}$ and $\{G=0\}$ have no common $\Fq$-points. Is there a quantitative lower bound for the number of curves in $\mathcal{L}$ defined over $\F_q$ which are nonblocking?
\end{question}

To elaborate further, let $\mathcal{L}=\langle F, G\rangle$ be a pencil that has no $\F_q$-points in its base locus. We refer to Example~\ref{rem:common-Fq-base-point}, which illustrates that the condition on the base locus is natural and necessary. Given such a pencil, consider the $q+1$ curves
$$
C_{[s:t]}=\{sF+tG=0\}
$$
where $[s:t]$ ranges in $\bP^1(\Fq)$. These curves will be called \emph{$\Fq$-members} of $\mathcal{L}=\langle F, G\rangle$. Since $\mathcal{L}$ has no $\Fq$-points in its base locus, the sets of $\Fq$-rational points on these $q+1$ curves are pairwise disjoint and cover all the $\Fq$-points of the plane. Indeed, if $P\in\mathbb{P}^2(\Fq)$ is any $\Fq$-point, it belongs to a unique member $C_{[s:t]}$ with $[s:t]=[-G(P):F(P)]$. In summary, the collection of sets $\{C_{[s:t]}(\Fq)\}_{[s:t]\in\bP^1(\Fq)}$ forms a partition of $\bP^2(\Fq)$ into $q+1$ parts, with the understanding that some of the parts may be empty. We will say that the pencil $\mathcal{L}$ \emph{induces} the partition $\{C_{[s:t]}(\Fq)\}_{[s:t]\in\bP^1(\Fq)}$.

Let us explain a heuristic that suggests a plausible answer to Question~\ref{quest:main}. Since there are $q+1$ distinct $\Fq$-members of $\mathcal{L}$, and together they cover $q^2+q+1$ points, it follows that the average number of $\Fq$-points on a given $\F_q$-member of $\mathcal{L}$ is exactly:
$$
\frac{q^2+q+1}{q+1} < q + 1.
$$
Since there does not exist a blocking set with less than $q+1$ points, it is immediate that $\mathcal{L}$ contains at least one nonblocking curve. Note that the averaging argument does not tell us whether or not the ``median" number of points on a random $\Fq$-member in $\mathcal{L}$ is less than $q+1$. Nevertheless, it is natural to ask whether at least half of the $\Fq$-members in the pencil must be nonblocking. Making this last question slightly weaker, we may instead ask the following.

 \begin{question}\label{quest:positive-prop} Let $\mathcal{L}=\langle F, G\rangle$ be a pencil of plane curves such that $\{F=0\}$ and $\{G=0\}$ have no common $\Fq$-points. Does there exist a universal constant $c_0>0$ such that at least $c_0 q$ distinct $\Fq$-members of $\mathcal{L}$ are nonblocking?
\end{question}

Question~\ref{quest:positive-prop} turns out to have a negative answer in general. 

\begin{thm}\label{thm:positive-proportion-counterex} Let $q=p^n$ be a prime power with $n$ even. There exists a pencil of plane curves over $\F_q$ with no $\F_q$-points in its base locus, which contains only $\sqrt{q}$ many nonblocking curves.
\end{thm}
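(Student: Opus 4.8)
The plan is to reduce Theorem~\ref{thm:positive-proportion-counterex} to a statement about partitions of $\bP^2(\Fq)$ into blocking sets, exhibit such a partition, and realize it by a pencil. The first observation is that it suffices to produce a partition of $\bP^2(\Fq)$ into $q-\sqrt q+1$ blocking sets together with a pencil $\mathcal{L}$, having empty base locus over $\Fq$, whose $\Fq$-members induce exactly this partition padded with $\sqrt q=(q+1)-(q-\sqrt q+1)$ empty parts: since the empty set is not a blocking set, such a pencil has precisely $\sqrt q$ nonblocking $\Fq$-members.

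For the partition I would use the Singer cycle. Write $q=r^2$ with $r=\sqrt q$; from the factorization $q^2+q+1=(q-\sqrt q+1)(q+\sqrt q+1)$, the cyclic Singer group $\Sigma\cong\Z/(q^2+q+1)$ acting regularly on $\bP^2(\Fq)$ has a unique subgroup $H$ of order $q+\sqrt q+1$. It is classical that an $H$-orbit is a Baer subplane of $\bP^2(\Fq)$, so the $q-\sqrt q+1$ cosets of $H$ partition $\bP^2(\Fq)$ into Baer subplanes $S_1,\dots,S_{q-\sqrt q+1}$; alternatively one may just quote the well-known fact that $\bP^2(\Fq)$ with $q$ square admits such a partition. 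Each $S_i$ is a (nontrivial) blocking set, since every $\Fq$-line meets a Baer subplane in $1$ or $\sqrt q+1$ points.

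Realizing this partition by a pencil is the step I expect to demand the most care. Fix a degree $d$ divisible by $3$ and large enough that the $q^2+q+1$ points of $\bP^2(\Fq)$ impose independent conditions on the linear system of degree-$d$ plane curves (equivalently, the Hilbert function of $\bP^2(\Fq)$ in degree $d$ is $q^2+q+1$); this holds for all sufficiently large $d$, and this is exactly where letting the degree grow is used. Let $N(x_1,x_2,x_3)=N_{\mathbb{F}_{q^3}/\Fq}(x_1e_1+x_2e_2+x_3e_3)$ for an $\Fq$-basis $e_1,e_2,e_3$ of $\mathbb{F}_{q^3}$: then $N$ is homogeneous of degree $3$ over $\Fq$ and has no nontrivial $\Fq$-zero, so $G\colonequals N^{d/3}$ is a degree-$d$ form with $\{G=0\}(\Fq)=\varnothing$. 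Choose distinct $\lambda_1,\dots,\lambda_{q-\sqrt q+1}\in\Fq$ and let $f\colon\bP^2(\Fq)\to\Fq$ be constant equal to $\lambda_i$ on $S_i$. By the choice of $d$, interpolating at one representative $v_P$ of each point $P$ the values $f(P)\,G(v_P)$ produces a degree-$d$ form $F$ with $F(v)=f([v])\,G(v)$ for all $v\in\Fq^3\setminus\{0\}$ (homogeneity of both sides promotes the equality from representatives to all such $v$). Then $F\neq 0$, and $F,G$ are linearly independent because $f$ is nonconstant; as $G$ has no $\Fq$-zero, $\mathcal{L}=\langle F,G\rangle$ is a pencil with empty base locus over $\Fq$. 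For $[s:t]\in\bP^1(\Fq)$ the $\Fq$-points of $C_{[s:t]}=\{sF+tG=0\}$ are the $P$ with $s\,f(P)+t=0$, so
\[
C_{[s:t]}(\Fq)=
\begin{cases}
S_i & \text{if } s\neq 0 \text{ and } -t/s=\lambda_i,\\[2pt]
\varnothing & \text{otherwise.}
\end{cases}
\]
Hence exactly $q-\sqrt q+1$ members of $\mathcal{L}$ are blocking curves (each being a Baer subplane) while exactly $\sqrt q$ members are empty, hence nonblocking.

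The points I would check most carefully are: (i) the assertion that a Singer coset of the order-$(q+\sqrt q+1)$ subgroup is genuinely a Baer subplane — or, failing that, a clean reference for the existence of a Baer subplane partition of $\bP^2(\Fq)$; and (ii) producing (or citing) an explicit $d$ for which the points of $\bP^2(\Fq)$ impose independent conditions on degree-$d$ curves, so that the interpolation defining $F$ is valid. If the earlier ``translation'' of the geometric problem into a combinatorial one already packages a way to convert an arbitrary partition of $\bP^2(\Fq)$ into a pencil of suitably large degree, then the realization step above can be bypassed by a direct appeal to that mechanism.
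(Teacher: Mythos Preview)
Your proposal is correct and follows essentially the same two-step strategy as the paper: partition $\bP^2(\Fq)$ into $q-\sqrt{q}+1$ Baer subplanes (the paper simply cites Hirschfeld for this, while you sketch the Singer-cycle argument), then realize this partition by a pencil with no $\Fq$-base-points. Your suspicion in the last paragraph is exactly right: the paper proves a general Proposition~\ref{prop:combinatorial-reduction} stating that \emph{any} partition of $\bP^2(\Fq)$ into $q+1$ parts is induced by a pencil, and then simply invokes it. Your norm-form-plus-interpolation construction is a valid alternative proof of (a special case of) that proposition; the paper instead builds explicit degree-$3(q-1)$ ``indicator'' polynomials $S_Q$ (Lemma~\ref{lem:interpolation-1}) and sums them, which has the advantage of giving a concrete degree rather than appealing to ``$d$ large enough.''
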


The key ingredient in the proof of Theorem~\ref{thm:positive-proportion-counterex} is to establish the connection between Question~\ref{quest:positive-prop} and the question of determining the maximum number of disjoint blocking sets in $\bP^2(\F_q)$, first studied by Beutelspacher and Eugeni~\cite{BE86} and independently by Csima and F\"uredi \cite{CF86}. The latter question arises more generally and abstractly in the setting of hypergraph coloring; see \cite{BMPS06} for related discussions. For the discussion of the maximum permissible value of $c_0$ in Question~\ref{quest:positive-prop}, see Remark~\ref{rem:disjoint-blocking-sets}. We will prove in Proposition~\ref{prop:sqrt-lower-bound} that every pencil with no $\Fq$-points in its base locus has at least $\sqrt{q}$ many nonblocking curves, so Theorem~\ref{thm:positive-proportion-counterex} is sharp. 

 Let $p=\operatorname{char}(\mathbb{F}_q)$. We show that Question~\ref{quest:positive-prop} has a positive answer with $c_0=\frac{1}{3}$ when $q=p$ or the degree of the linear system is at most $p$. We thank the anonymous referee for suggesting the proof of this theorem.

\begin{thm}\label{thm:low-degree-pencil}
Given a pencil of plane curves of degree $d$ in $\mathbb{P}^2$ defined over $\mathbb{F}_q$ with no $\Fq$-points in its base locus, assume $q=p$ or $d\leq p$. Then at least $\frac{q+1}{3}$ distinct $\Fq$-members of the pencil are nonblocking curves. 
\end{thm}

In Question~\ref{quest:positive-prop}, the constant $c_0>0$ is required to be universal. If we allow the constant to depend on the degree of the curves, then Question~\ref{quest:positive-prop} has a positive answer. More precisely, we will prove the following effective result.

\begin{thm}\label{thm:fixed-degree-pencil}
Given a pencil of plane curves of degree $d\leq q$ in $\mathbb{P}^2$ defined over $\mathbb{F}_q$ with no $\Fq$-points in its base locus, at least $\frac{q+1}{d+1}$ distinct $\Fq$-members of the pencil are nonblocking curves.
\end{thm}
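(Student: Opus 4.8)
The plan is a double‑counting argument over the $\Fq$‑lines, preceded by an easy case split. Recall from the discussion above that the $\Fq$‑point sets of the $q+1$ members of $\mathcal{L}$ partition $\bP^2(\Fq)$; for a member $C$ write $n_C=|C(\Fq)|$, so that $\sum_C n_C=q^2+q+1$, and for an $\Fq$‑line $\ell$ write $a_C(\ell)=|C(\Fq)\cap\ell(\Fq)|$, so that $\sum_C a_C(\ell)=|\ell(\Fq)|=q+1$. Call a member \emph{missed} by $\ell$ if $a_C(\ell)=0$; every member missed by some $\Fq$‑line is nonblocking, so the number of nonblocking members is at least the maximum, over $\Fq$‑lines $\ell$, of the number of members missed by $\ell$, and it suffices to bound the latter from below (keeping in mind that this count is an integer).

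First suppose some member $C_0$ has a line $\ell_0$ as a component. Then $\ell_0(\Fq)\subseteq C_0(\Fq)$, so by disjointness each of the remaining $q$ members is missed by $\ell_0$ and hence nonblocking; since $\tfrac{q+1}{d+1}\le q$ for $d\ge 1$, we are done. So assume no member has a line component. Then, as $\ell$ is not a component of $C$, Bézout's theorem gives $a_C(\ell)\le d$ for every member $C$ and every $\Fq$‑line $\ell$. Fix $\ell$ and write $a_C=a_C(\ell)$; the number of members missed by $\ell$ is
\[
E_\ell \;=\; (q+1)-\#\{C : a_C\ge 1\} \;=\; \sum_{C\,:\,a_C\ge 1}(a_C-1).
\]
Since $\binom{a}{2}\le\tfrac d2(a-1)$ for all integers $1\le a\le d$, we obtain $\sum_C\binom{a_C}{2}\le\tfrac d2 E_\ell$.

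Summing over all $q^2+q+1$ lines $\ell$ and interchanging the order of summation, the left‑hand side becomes $\sum_C\binom{n_C}{2}$, because every pair of distinct points of $C(\Fq)$ lies on a unique $\Fq$‑line. By convexity of $x\mapsto\binom{x}{2}$ together with $\sum_C n_C=q^2+q+1$ over $q+1$ summands,
\[
\sum_C\binom{n_C}{2}\;\ge\;(q+1)\binom{\tfrac{q^2+q+1}{q+1}}{2}\;=\;\frac{q^2(q^2+q+1)}{2(q+1)},
\]
so that $\sum_\ell E_\ell\ge \tfrac{q^2(q^2+q+1)}{d(q+1)}$. Averaging over the $q^2+q+1$ lines produces a line $\ell^{\ast}$ with $E_{\ell^{\ast}}\ge\tfrac{q^2}{d(q+1)}$, and the $E_{\ell^{\ast}}$ members missed by $\ell^{\ast}$ are all nonblocking.

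Finally one compares $\tfrac{q^2}{d(q+1)}$ with $\tfrac{q+1}{d+1}$, exploiting that the count of nonblocking members is an integer. If $d(2q+1)\le q^2$, cross‑multiplication gives $\tfrac{q^2}{d(q+1)}\ge\tfrac{q+1}{d+1}$ directly. Otherwise $d>\tfrac{q^2}{2q+1}>\tfrac{q-1}{2}$, so $\tfrac{q+1}{d+1}<2$; if $d=q$ then only one nonblocking member is required and $E_{\ell^{\ast}}\ge\tfrac{q}{q+1}>0$ provides it, while if $d\le q-1$ then $E_{\ell^{\ast}}\ge\tfrac{q^2}{(q-1)(q+1)}>1$ provides at least two, matching $\lceil\tfrac{q+1}{d+1}\rceil$. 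The step that genuinely needs care is this last comparison: once $d$ exceeds roughly $q/2$ the averaged bound $\tfrac{q^2}{d(q+1)}$ slips just below the target $\tfrac{q+1}{d+1}$, and one must use integrality — both quantities then lying strictly between $1$ and $2$ — to recover the theorem. (Alternatively, in that range one could appeal to Proposition~\ref{prop:sqrt-lower-bound}, since $\sqrt q\ge\tfrac{q+1}{d+1}$ whenever $d\ge\sqrt q$, at the cost of a small separate check for tiny $q$.)
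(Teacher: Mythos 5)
Your proof is correct, but it takes a genuinely different route from the paper's. The paper first establishes a standalone lower bound $\#C(\Fq) > q + \tfrac{q+\sqrt q}{d}$ for any nontrivially blocking curve $C$ of degree $d$ (its Lemma~\ref{lem:lower-bound-blocking-curve}, which performs a line--point incidence count for a single curve and feeds in Bruen's bound $\#C(\Fq)\ge q+\sqrt q+1$), and then sums this bound over the blocking members and compares with $q^2+q+1$. You instead run the incidence count across the whole pencil at once: for each $\Fq$-line $\ell$ you bound the number $E_\ell$ of missed members from below via $\sum_C\binom{a_C(\ell)}{2}\le\tfrac d2E_\ell$, sum over all lines to bring in $\sum_C\binom{n_C}{2}$, apply convexity, and then average to exhibit an explicit line $\ell^\ast$ missing many members. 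Your approach avoids Bruen's theorem entirely and is more self-contained; the price you pay is the final comparison, since for $d$ above roughly $q/2$ the averaged bound $\tfrac{q^2}{d(q+1)}$ dips just below the target $\tfrac{q+1}{d+1}$ and you must use integrality of $E_{\ell^\ast}$ — a step you carry out correctly in both subcases $d=q$ and $d\le q-1$. Your opening reduction (an $\Fq$-line component in one member forces the other $q$ members to be nonblocking) mirrors the paper's handling of trivially blocking members and is equivalent to it under the hypothesis $d\le q$, since then a curve containing all $q+1$ points of a line must contain that line as a component by B\'ezout.
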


The inequality $d\leq q$ in the hypothesis is natural. Indeed, if $d>q$, then $0<\frac{q+1}{d+1}<1$, and the conclusion still holds because we have already seen above that the pencil must have at least one nonblocking $\Fq$-member. 

\subsection*{Outline of the paper.} In Section~\ref{sec:combinatorial-reduction}, we show that every partition of the $q^2+q+1$ points of $\bP^2(\F_q)$ into $q+1$ sets can be realized by a pencil of plane curves. This key result Proposition~\ref{prop:combinatorial-reduction} allows us to prove our main Theorem~\ref{thm:positive-proportion-counterex} in Section~\ref{sec:lower-bounds}. We also prove Theorem~\ref{thm:low-degree-pencil} in Section~\ref{sec:lower-bounds}. We discuss the case of fixed-degree pencils and prove Theorem~\ref{thm:fixed-degree-pencil} in Section~\ref{sec:fixed-degree-case}.

\section{Realizing partitions by pencils of curves}\label{sec:combinatorial-reduction}

The goal of this section is to prove the following result, which shows that any partition of $\mathbb{P}^2(\Fq)$ into $q+1$ sets (where some sets could be empty) is induced by a pencil of plane curves.

\begin{prop}\label{prop:combinatorial-reduction}
Suppose $U_1, U_2, \dots, U_{q+1}$ form a partition of $\mathbb{P}^2(\mathbb{F}_q)$. Then there exists a pencil of curves $\mathcal{L}=\langle F, G\rangle$ whose base locus has no $\Fq$-points such that $\mathcal{L}$ induces the partition $\{U_i\}_{i=1}^{q+1}$.
\end{prop}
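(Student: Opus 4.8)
The plan is to realize the prescribed partition as the partition of $\bP^2(\Fq)$ into the fibers of a suitable map to $\bP^1(\Fq)$. Recall from the discussion above that a pencil $\mathcal{L}=\langle F,G\rangle$ with empty $\Fq$-base locus induces the partition $\{C_{[s:t]}(\Fq)\}_{[s:t]\in\bP^1(\Fq)}$, and that a point $P\in\bP^2(\Fq)$ lies on $C_{[s:t]}$ exactly when $[s:t]=[-G(P):F(P)]$; thus the induced partition is precisely the set of fibers of $P\mapsto[-G(P):F(P)]$. So it suffices to produce two linearly independent homogeneous forms $F,G$ of a common degree with no common $\Fq$-zero whose fiber partition equals $\{U_i\}$. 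Index the parts by $\bP^1(\Fq)=\{\lambda_1,\dots,\lambda_{q+1}\}$, write $\lambda_k=[s_k:t_k]$, and let $k(P)$ be the unique index with $P\in U_{k(P)}$. For each $P$ set $(a_P,b_P):=(t_{k(P)},-s_{k(P)})\in\Fq^2\setminus\{(0,0)\}$. The point of this choice is that for a pair $(F,G)$ with $F(P)=a_P$ and $G(P)=b_P$ (with respect to any fixed system of representatives of the points of $\bP^2(\Fq)$) one has $s_jF(P)+t_jG(P)=s_jt_{k(P)}-t_js_{k(P)}$, which is a representative-independent quantity vanishing if and only if $\lambda_j=\lambda_{k(P)}$, i.e.\ if and only if $j=k(P)$. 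Hence any such pair automatically induces the desired partition and has no common $\Fq$-zero because $(a_P,b_P)\neq(0,0)$.

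It remains to solve an interpolation problem: for a fixed system of representatives, and any prescribed values $(c_P)_{P\in\bP^2(\Fq)}\in\Fq^{\bP^2(\Fq)}$, there is a homogeneous form of some degree $D$ (independent of the data) with value $c_P$ at $P$ for all $P$. To prove this I would first build, for each $R\in\bP^2(\Fq)$, a quadratic form $g_R$ whose only $\Fq$-zero in $\bP^2$ is $R$: pick two distinct $\Fq$-lines $\ell_1=0$ and $\ell_2=0$ through $R$ and an \emph{anisotropic} binary quadratic form $\kappa(u,v)$ over $\Fq$ (such $\kappa$ exists for every $q$: take $u^2-\delta v^2$ with $\delta$ a nonsquare when $q$ is odd, and $u^2+uv+\mu v^2$ with $\Tr(\mu)=1$ when $q$ is even), and set $g_R:=\kappa(\ell_1,\ell_2)$; then for $Q\in\bP^2(\Fq)$ we have $g_R(Q)=0\iff\ell_1(Q)=\ell_2(Q)=0\iff Q=R$. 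Then the ``Lagrange'' products $L_P:=\prod_{Q\in\bP^2(\Fq),\,Q\neq P}g_Q$ are forms of degree $D:=2(q^2+q)$ with $L_P(P)\neq 0$ and $L_P(Q)=0$ for every $Q\neq P$, so $\sum_P\bigl(c_P/L_P(P)\bigr)L_P$ interpolates in degree $D$.

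Now set $F:=\sum_P\bigl(a_P/L_P(P)\bigr)L_P$ and $G:=\sum_P\bigl(b_P/L_P(P)\bigr)L_P$, both of degree $D$; by the first paragraph the pencil $\langle F,G\rangle$ has empty $\Fq$-base locus and induces $\{U_i\}$. The last thing to arrange is that $F$ and $G$ be linearly independent so that $\langle F,G\rangle$ is genuinely a pencil. They can fail to be independent (or even to be nonzero) only in the degenerate case where a single part $U_k$ equals all of $\bP^2(\Fq)$; this is repaired by adding to $F$ and to $G$ suitable forms of degree $D$ in the vanishing ideal of $\bP^2(\Fq)$ (which is nonzero in many independent ways in degree $D$), since this changes no value $F(P),G(P)$ and hence preserves every property checked above. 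Alternatively one disposes of that degenerate case by hand: take $F$ to be any nonzero form vanishing on all of $\bP^2(\Fq)$ and $G$ any form with no $\Fq$-zero, of equal degrees.

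The one step carrying real content is the interpolation statement, i.e.\ the surjectivity of the evaluation map from degree-$D$ ternary forms onto $\Fq^{\bP^2(\Fq)}$ — a projective Reed--Muller type fact — which the explicit Lagrange construction via the point-isolating quadrics $g_R$ renders elementary at the price of a large $D$; the existence of anisotropic binary quadratic forms over every $\Fq$ in both parities is the standard ingredient one should be careful to state. The rest — the choice of $(a_P,b_P)$, the verification that the fiber partition comes out right, and the degenerate-case bookkeeping — is routine.
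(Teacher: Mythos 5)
Your strategy is the same as the paper's: encode the partition as a map $\bP^2(\Fq)\to\bP^1(\Fq)$, lift it to a pair of scalar-valued assignments on $\bP^2(\Fq)$, and realize those by homogeneous interpolation to get $F,G$. The only genuine divergence is in how the interpolation lemma is proved. The paper builds, for each $Q$, an ``indicator'' form $S_Q$ of degree $3(q-1)$ directly, using $z^{q-1}\bigl(z^{q-1}-\ell_1^{q-1}\bigr)\bigl(z^{q-1}-\ell_2^{q-1}\bigr)$ for two lines $\ell_1,\ell_2$ through $Q$; the degree is deliberately a multiple of $q-1$, so $S_Q$ and the resulting $F,G$ are honest functions on $\bP^2(\Fq)$ and no choice of representatives needs to be carried around. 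You instead build, for each $R$, a point-isolating quadric $g_R=\kappa(\ell_1,\ell_2)$ with $\kappa$ anisotropic, and take the Lagrange product $L_P=\prod_{Q\ne P}g_Q$. This is correct, but notice the degree $D=2(q^2+q)$ is generally \emph{not} a multiple of $q-1$ (it is $\equiv 4\pmod{q-1}$), so your $F(P),G(P)$ really are representative-dependent scalars; you do flag this, and it is harmless because what is used is only the vanishing of $s_jF(P)+t_jG(P)$, which is scale-invariant. Still, the paper's degree-$3(q-1)$ construction avoids both that subtlety and the (mild) extra input of anisotropic binary quadratic forms over every $\Fq$, and is far more economical in degree. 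One thing your write-up adds that the paper elides: you explicitly treat the degenerate partition with a single nonempty part, where the naive $F$ and $G$ come out proportional (or one of them zero), and you repair it by perturbing within the vanishing ideal of $\bP^2(\Fq)$ in degree $D$, or constructing $F,G$ by hand. The paper's proof, as written, would produce $R_g\equiv 0$ (hence $F\equiv 0$) when $\varphi$ is constant equal to $[1:0]$, so your attention to this corner case is a small improvement even if the case is irrelevant to the paper's downstream applications. Your linear-independence analysis — that $cF+dG\equiv 0$ forces $ca_P+db_P=0$ for all $P$, hence all $(a_P,b_P)$ proportional, hence a single nonempty part — is correct.
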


The proof of Proposition~\ref{prop:combinatorial-reduction} relies on two lemmas. 

\begin{lem}\label{lem:interpolation-1}
Given any $Q\in\bP^2(\Fq)$, there exists a homogeneous polynomial $S_Q\in\Fq[x,y,z]$ of degree $3(q-1)$ such that $S_Q(Q)=1$, while $S_Q(P)=0$ for each point $P\ne Q$ in $\bP^2(\Fq)$. 
\end{lem}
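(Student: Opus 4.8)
The plan is to construct $S_Q$ explicitly as a product of linear-type factors that vanish on all $\Fq$-points except $Q$, exploiting the fact that $\bP^2(\Fq)$ is covered by a small number of lines through any chosen point. Fix homogeneous coordinates and, after a projective change of coordinates defined over $\Fq$, assume $Q=[0:0:1]$. The $\Fq$-points of $\bP^2$ other than $Q$ fall into two families: the $q+1$ points on the line $\{z=0\}$, and for each such point $[a:b:0]$ the remaining $q-1$ points on the line joining it to $Q$, namely points of the form $[a:b:c]$ with $c\in\Fq^\times$ (equivalently, with $c/1$ ranging over nonzero scalars after normalizing). So altogether the points to be killed are organized along the $q+1$ lines through $Q$.

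First I would handle the line at infinity. The polynomial $z^{q-1}$ is homogeneous of degree $q-1$, vanishes at every point of $\{z=0\}$, and equals $1$ at $Q=[0:0:1]$. Next, for the points off $\{z=0\}$: the product $\prod_{\lambda\in\Fq}(x-\lambda z)$ is, up to the standard identity, equal to $x^q - x z^{q-1}$, a homogeneous polynomial of degree $q$ vanishing exactly at those $\Fq$-points $[a:b:c]$ with $c\neq 0$ and $a/c\in\Fq$ — but that is all of them. A cleaner route: use $x^q z - x z^q$ and $y^q z - y z^q$? Let me instead take the two degree-$(q-1)$ (after dividing by $z$) building blocks and combine them so that the only common non-vanishing point is $Q$. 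Concretely, set
\[
S_Q = z^{q-1} + (x^q - x z^{q-1})^{?} \cdots
\]
— rather than guess, the clean construction is: let $A = x^q z - xz^q$ and $B = y^q z - y z^q$, each homogeneous of degree $q+1$, each vanishing on all of $\bP^2(\Fq)$; that is too much. The right object is $x^{q-1}$-type combined additively. I would therefore build $S_Q$ additively: choose $h_1,h_2,h_3$ homogeneous of the same degree such that $h_1+h_2+h_3$ vanishes at every $\Fq$-point $\neq Q$ and is nonzero at $Q$, then rescale. The cleanest concrete choice: since every point $P=[a:b:c]\neq Q$ has $(a,b)\neq(0,0)$, the polynomial
\[
T \;=\; \bigl(x^q z - x z^q\bigr)\cdot(\text{stuff}) \;+\; \bigl(y^q z - y z^q\bigr)\cdot(\text{stuff})
\]
is not quite it either; the genuinely correct gadget is $1 - (1-x^{q-1})(1-y^{q-1})\cdots$ made homogeneous. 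Given the intended degree $3(q-1)$, I expect the author's $S_Q$ to be
\[
S_Q \;=\; z^{3(q-1)} \;-\; \bigl(z^{q-1}-x^{q-1}\bigr)\cdots,
\]
so the step I would actually carry out is: verify that
\[
S_Q \;=\; \bigl(z^{q-1} - x^{q-1}\bigr)\bigl(z^{q-1}-y^{q-1}\bigr)\,w
\]
for a suitable monomial $w$ of degree $q-1$ does the job — because at a point with $a\neq 0$ (so $x$-coordinate nonzero) we get $z^{q-1}-x^{q-1}=0$ after normalizing $c=1$ when $a\in\Fq^\times$... this requires care about the case $c=0$.

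The honest summary of the plan: reduce to $Q=[0:0:1]$ by an $\Fq$-linear change of variables (which preserves homogeneity and degree); exhibit a homogeneous $S_Q$ of degree $3(q-1)$ as a product/sum of the three "coordinate" factors $z^{q-1}-x^{q-1}$, $z^{q-1}-y^{q-1}$, and a third factor controlling the line $\{z=0\}$, using repeatedly the identity $\prod_{\lambda\in\Fq}(u-\lambda v) = uv^{q-1}\cdots$ together with $a^{q-1}=1$ for $a\in\Fq^\times$ and $0^{q-1}=0$; check the two vanishing cases (points with $z\neq 0$ versus points on $z=0$) separately; and finally divide by the nonzero scalar $S_Q(Q)$ to normalize the value at $Q$ to $1$. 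The main obstacle is bookkeeping the interaction between the "affine" vanishing (handled by $u^{q-1}=1$) and the points on the line at infinity, and confirming that degree exactly $3(q-1)$ — rather than something larger — suffices; I expect this to be the only delicate point, and it is resolved by noting that each of the three pencils of lines through $Q$ contributes one factor of degree $q-1$.
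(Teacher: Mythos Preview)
Your final construction is the paper's: with the third factor taken as $z^{q-1}$ (which you identified early on), the polynomial $S_Q = z^{q-1}\bigl(z^{q-1}-x^{q-1}\bigr)\bigl(z^{q-1}-y^{q-1}\bigr)$ is exactly what the paper writes, specialized to the two lines $\{x=0\}$ and $\{y=0\}$ through $Q=[0:0:1]$. The paper avoids the coordinate change by choosing any two distinct $\Fq$-lines $\ell_1,\ell_2$ through $Q$ and setting $S_Q = z^{q-1}\bigl(z^{q-1}-\ell_1^{q-1}\bigr)\bigl(z^{q-1}-\ell_2^{q-1}\bigr)$ after arranging the $z$-coordinate of $Q$ to be nonzero; you should drop the exploratory middle and the ``three pencils of lines through $Q$'' phrasing, since only two members of the single pencil through $Q$ (plus the factor $z^{q-1}$ handling $\{z=0\}$, which does \emph{not} pass through $Q$) are in play.
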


\begin{proof}
Suppose $Q:=[a_0:b_0:c_0]\in \bP^2(\Fq)$. Without loss of generality, we may assume that $c_0\ne 0$. Let $L_1$ and $L_2$ be two (distinct) $\Fq$-lines passing through the point $Q$; we let their equations be $\alpha_1x+\beta_1+\gamma_1z=0$ and $\alpha_2x+\beta_2y+\gamma_2z=0$ for some $\alpha_i,\beta_i,\gamma_i\in\Fq$. Consider the homogeneous polynomial $S_Q$ of degree $3(q-1)$ given by
$$S_Q(x,y,z)=z^{q-1}\cdot \left(z^{q-1}-(\alpha_1x+\beta_1y+\gamma_1z)^{q-1}\right)\cdot \left(z^{q-1}- (\alpha_2x+\beta_2y+\gamma_2z)^{q-1}\right).$$
Since $z$-coordinate of $Q$ is nonzero by assumption, we get $S_Q(Q)=1$. On the other hand, $S_Q(P)=0$ for each point $P\ne Q$ in $\bP^2(\Fq)$. Indeed, given such a point $P=[a_1:b_1:c_1]$, we have two cases: $c_1=0$ and $c_1\neq 0$. If $c_1=0$, then $S_Q(P)=0$ is immediate. If $c_1\neq 0$, then $P$ cannot be on both $L_1$ and $L_2$, and we again obtain $S_Q(P)=0$. 
\end{proof}

\begin{rem}\label{rem:interpolation}
In Lemma~\ref{lem:interpolation-1}, we arranged $S_Q$ to have degree divisible by $q-1$. This allows us to view $S_Q$ as a well-defined function $\mathbb{P}^2(\Fq)\to \Fq$. Indeed, if $[x_0:y_0:z_0]$ and $[x_1:y_1:z_1]$ represent the same point in $\bP^2(\Fq)$, then $(x_1, y_1, z_1) = (\lambda x_0, \lambda y_0, \lambda z_0)$ for some $\lambda\in\Fq^{\ast}$. As a result, $S_Q(x_1,y_1,z_1)=\lambda^{3(q-1)} S_Q(x_0, y_0, z_0) = S_Q(x_0, y_0, z_0)$, confirming that $S_Q$ is a well-defined function on $\bP^2(\Fq)$.
\end{rem}

\begin{lem}\label{lem:interpolation-2}
For any function $f\colon \mathbb{P}^2(\Fq)\to \Fq$ which is not identically zero, there exists a homogeneous polynomial $R_{f}\in \Fq[x,y,z]$ of degree $3(q-1)$ such that $R_f(a,b,c)=f([a:b:c])$ for each $[a:b:c]\in\mathbb{P}^2(\Fq)$.
\end{lem}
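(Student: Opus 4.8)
The plan is to build $R_f$ as a linear combination of the single-point interpolators $S_Q$ from Lemma~\ref{lem:interpolation-1}. Concretely, I would set
\[
R_f(x,y,z) = \sum_{Q\in\bP^2(\Fq)} f(Q)\cdot S_Q(x,y,z),
\]
where for each $Q\in\bP^2(\Fq)$ we fix once and for all a representative and the polynomial $S_Q$ of degree $3(q-1)$ provided by Lemma~\ref{lem:interpolation-1}. Since each $S_Q$ is homogeneous of degree $3(q-1)$ and $f(Q)\in\Fq$ is a scalar, the sum $R_f$ is again homogeneous of degree $3(q-1)$ (note that $f\not\equiv 0$ guarantees $R_f$ is not the zero polynomial, or at least that the construction is non-vacuous; even if cancellation occurred the statement would still hold, but the hypothesis is there to make "homogeneous polynomial" unambiguous).

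Next I would verify the interpolation property. Fix $[a:b:c]\in\bP^2(\Fq)$ and let $P$ denote this point, evaluated at the chosen representative $(a,b,c)$. By Lemma~\ref{lem:interpolation-1}, $S_Q(a,b,c)=1$ when $Q=P$ and $S_Q(a,b,c)=0$ for every $Q\neq P$ in $\bP^2(\Fq)$; strictly speaking Lemma~\ref{lem:interpolation-1} gives this for the representative chosen in its proof, but by Remark~\ref{rem:interpolation} each $S_Q$ descends to a well-defined function on $\bP^2(\Fq)$, so the values $S_Q(a,b,c)$ do not depend on the choice of representative. Therefore
\[
R_f(a,b,c) = \sum_{Q\in\bP^2(\Fq)} f(Q)\, S_Q(a,b,c) = f(P) = f([a:b:c]),
\]
which is exactly what we wanted.

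There is essentially no hard step here: the only mild subtlety is bookkeeping about representatives, which is handled cleanly by the observation in Remark~\ref{rem:interpolation} that $S_Q$ is a genuine function on $\bP^2(\Fq)$. One should also note that the sum is finite — it runs over the $q^2+q+1$ points of $\bP^2(\Fq)$ — so no convergence issue arises, and the degree is uniformly $3(q-1)$ across all summands precisely because Lemma~\ref{lem:interpolation-1} was arranged to produce that common degree. Thus the main content was already extracted in Lemma~\ref{lem:interpolation-1}, and Lemma~\ref{lem:interpolation-2} is the routine "partition of unity" style assembly on top of it.
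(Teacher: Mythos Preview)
Your proposal is correct and follows exactly the paper's approach: the paper also defines $R_f \colonequals \sum_{Q\in \bP^2(\Fq)} f(Q)\cdot S_Q$ and simply asserts that it satisfies the desired condition. You have supplied more detail than the paper does, but the argument is the same.
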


\begin{proof}
Borrowing the notation from Lemma~\ref{lem:interpolation-1}, we set
$$
R_f \colonequals \sum_{Q\in \bP^2(\Fq)}f(Q)\cdot S_Q
$$
which satisfies the desired condition.
\end{proof}

We have gathered the tools to prove Proposition~\ref{prop:combinatorial-reduction}. 

\begin{proof}[Proof of Proposition~\ref{prop:combinatorial-reduction}] Given a partition of $\mathbb{P}^2(\Fq)$ into $q+1$ sets $U_1, \ldots, U_{q+1}$, we may assume without loss of generality that $U_1\neq\emptyset$. There exists a function $\varphi\colon \bP^2(\Fq)\to \bP^1(\Fq)$ with the property that the preimages $\varphi^{-1}(Q)$ for the $q+1$ points $Q\in\bP^1(\Fq)$ provide exactly the same partition of $\bP^2(\Fq)$ as $U_1,\ldots U_{q+1}$, where we prescribe $\varphi^{-1}([1:1])=U_1$. We can find two functions $f\colon \bP^2(\Fq)\to \Fq$ and $g\colon \bP^2(\Fq)\to \Fq$ such that for each $P \in \bP^2(\F_q)$, 
\begin{equation}\label{eq:the-map-phi}
\varphi(P) = [f(P):g(P)].
\end{equation}
Note that $f$ and $g$ are not identically zero because $\varphi(P)=[1:1]$ for any $P\in U_1$. By Lemma~\ref{lem:interpolation-2}, we have two homogeneous polynomials $R_f$ and $R_g$ both of degree $3(q-1)$ which induce $f$ and $g$, respectively. 

Let $F=-R_g$ and $G=R_f$. We claim that the pencil $\mathcal{L}=\langle F, G\rangle$ induces the partition $U_1, \ldots U_{q+1}$. First, $\mathcal{L}$ has no $\Fq$-points in its base locus since $f(P)$ and $g(P)$ cannot be zero simultaneously for any $P\in\mathbb{P}^2(\Fq)$ by \eqref{eq:the-map-phi}. Next, for any $[u:v]\in\bP^1$ and a point $P\in\bP^2$, we have
$$
u F(P) + v G(P) = 0 \ \ \Longleftrightarrow \ \ [u:v]=[G(P):-F(P)]=[f(P):g(P)]=\varphi(P).
$$
Thus, a given $\Fq$-point $P$ belongs to the $\Fq$-member of the pencil $\mathcal{L}$ parametrized by $[u:v]\in\bP^1(\Fq)$ if and only if $P\in \varphi^{-1}([u:v])$. Consequently, the sets of $\Fq$-points contained in the $q+1$ distinct $\Fq$-members of the pencil $\mathcal{L}$ precisely correspond to the partition $U_1, \ldots, U_{q+1}$. \end{proof}

\begin{rem}\label{rmk:prop-general} It is possible to generalize Proposition~\ref{prop:combinatorial-reduction} where $U_1, U_2, ..., U_{q+1}$ together cover $\mathbb{P}^2(\Fq)$, although they are no longer assumed to be pairwise disjoint. To obtain a pencil of curves that induces the sets $U_1, U_2, ..., U_{q+1}$, it is necessary that the equality
$$U_i\cap U_j = B \colonequals \bigcap_{k=1}^{q+1} U_k$$
is satisfied for each $i\ne j$. Under this assumption, one can prove that there exists a pencil $\mathcal{L}$ of plane curves with $\Fq$-members $C_1, C_2, ..., C_{q+1}$ such that $C_i(\Fq) = U_i$ for each $1\leq i\leq q+1$. The proof is almost identical to the current proof of Proposition~\ref{prop:combinatorial-reduction}. We start by picking
$$
\varphi\colon \bP^2(\Fq)\setminus B \longrightarrow \bP^1(\Fq)
$$
so that the preimages $\varphi^{-1}(\alpha_i)$ of the points $\alpha_1, ..., \alpha_{q+1}$ in $\bP^1(\Fq)$ coincide with the pairwise disjoint sets $U_i\setminus B$ for $i=1,..,q+1$. By using Lemma~\ref{lem:interpolation-2}, we can find homogeneous polynomials $F$ and $G$ which both vanish on $B$ such that the pencil $\mathcal{L}=\langle F, G\rangle$ satisfies the desired properties.
\end{rem}

\section{Lower bounds on the number of nonblocking curves}\label{sec:lower-bounds}

The averaging argument presented in the introduction showed that if a pencil of plane curves has no $\Fq$-points in its base locus, there must be at least one nonblocking curve in the pencil. We explain what happens when the hypothesis on the base locus is dropped. If the set of $\Fq$-points of the base locus is itself a blocking set, then clearly, all the curves in the pencil are blocking. Additional examples of pencils whose $\Fq$-members are all blocking can be constructed using the explicit families of blocking curves in \cite{AGY23}*{Proposition 6.1}; in such pencils, the base locus may have a large number of $\Fq$-points. It may be tempting to believe there should be at least some nonblocking curves if the base locus is small. However, we will now exhibit a pencil $\mathcal{L}$ of plane curves of degree $d\geq 2$ over $\mathbb{F}_q$ containing \emph{exactly one} $\Fq$-point in its base locus such that every $\Fq$-member of $\mathcal{L}$ is a blocking curve. This example shows that the hypothesis that the base locus has no $\Fq$-points cannot be relaxed.

\begin{ex}\label{rem:common-Fq-base-point} 
Let $m\in\mathbb{N}$ with $\gcd(m, q-1)=1$. Consider the pencil $\mathcal{L}=\langle F, G\rangle$, where
\begin{align*}
F(x,y,z) = y^m, \quad\quad G(x,y,z) = z^m.
\end{align*}
The base locus of this pencil, defined by the intersection of $\{F=0\}$ and $\{G=0\}$, consists of a single $\Fq$-point $[1:0:0]$. Now, consider an arbitrary curve $C_{[a:b]} =\{aF+bG=0\}$ in the pencil, where $[a:b]\in\mathbb{P}^1(\mathbb{F}_q)$. We show that the curve $C_{[a:b]}=\{aF + bG = 0\}$ intersects \emph{every} $\Fq$-line $L$ at some $\Fq$-point. 

If $L$ is defined by $\lambda y + \mu z=0$ for some $\lambda,\mu\in\mathbb{F}_q$, then $[1:0:0]$ is a desired intersection point. Otherwise, suppose $L$ is given by $x=\lambda y + \mu z$ for some $\lambda,\mu\in\mathbb{F}_q$. Take any solution $[y_0\colon z_0]\in\mathbb{P}^1(\mathbb{F}_q)$ to $ay^m+bz^m=0$. Such a solution exists, since $m$ was chosen so that $\gcd(m,q-1) = 1$; the map
$t\mapsto t^{m}$ is an automorphism of the cyclic group $(\Fq^*,\cdot)$. The point $[\lambda y_0+\mu z_0\colon y_0\colon z_0]$ lies in the intersection of the line $L$ and the curve $ay^m+bz^m=0$. Consequently, every $\Fq$-member of $\mathcal{L}$ is a blocking curve. Alternatively, one can check that each $\Fq$-member of $\mathcal{L}$ has an $\F_q$-linear component and is thus blocking. 

An alternative construction can be carried out by using the generalized version of Proposition~\ref{prop:combinatorial-reduction} (see Remark~\ref{rmk:prop-general}) where $U_1, U_2, ..., U_{q+1}$ consist of $\Fq$-points of $q+1$ distinct lines passing through a common point. The pencil produced using that method will have a degree $3(q-1)$.
\end{ex}

We have seen that having even a single $\Fq$-point in the base locus may cause all of the $\Fq$-members to be blocking curves. Thus, for the remainder of the paper, we will work under the hypothesis that there are no $\Fq$-points in the base locus of a given pencil. Our first quantitative lower bound on the number of nonblocking curves is implicit in the finite geometry literature \cite{BE86}*{Theorem 2.2} and \cite{CF86}*{Theorem 4.1}. We include a simple proof for completeness; the ideas and notation used in this proof will also appear in the remarks after the proof. 

\begin{prop}\label{prop:sqrt-lower-bound}
Every pencil of plane curves over $\Fq$ with no $\Fq$-points in its base locus has at least $\sqrt{q}$ many $\Fq$-members that are nonblocking.
\end{prop}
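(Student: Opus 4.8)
The plan is to exploit the partition structure: the $q+1$ $\F_q$-members of the pencil $\mathcal{L}$ induce a partition of $\bP^2(\F_q)$ into $q+1$ parts $C_{[s:t]}(\F_q)$, and a member is blocking exactly when its part is a blocking set. So it suffices to show that in any partition of $\bP^2(\F_q)$ into $q+1$ parts, fewer than $\sqrt q$ of the parts can be blocking sets — equivalently, one cannot have $\sqrt q$ pairwise disjoint blocking sets inside $\bP^2(\F_q)$. This is exactly the flavor of \cite{BE86}*{Theorem 2.2}, and I would reprove it directly.

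The key estimate is the classical lower bound on the size of a blocking set: any blocking set in $\bP^2(\F_q)$ has at least $q + \sqrt q + 1$ points. I would invoke this (it is standard; a nontrivial blocking set has at least $q+\sqrt q+1$ points by Bruen's bound, and a trivial blocking set has exactly $q+1 \geq q+\sqrt q + 1$ fails — wait, so for trivial blocking sets the bound is $q+1$). Let me restructure: let $k$ be the number of $\F_q$-members that are blocking curves, and among these say $k_1$ are trivially blocking (each containing a whole line, hence $\geq q+1$ points) and $k_2 = k - k_1$ are nontrivially blocking (each with $\geq q + \sqrt q + 1$ points by Bruen's theorem). Since the parts are pairwise disjoint and live inside $\bP^2(\F_q)$, summing their sizes gives
$$
k_1(q+1) + k_2(q+\sqrt q + 1) \leq |\bP^2(\F_q)| = q^2 + q + 1.
$$
If $k_2 \geq 1$, then using $k_1(q+1) + k_2(q+1) \leq k_1(q+1) + k_2(q+\sqrt q+1) \leq q^2+q+1$ gives $k = k_1 + k_2 \leq \frac{q^2+q+1}{q+1} < q+1$, which is too weak. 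The better move: at most one part can contain a full line's worth... no. Instead, I would observe that at most one trivially blocking member can contain all $q+1$ points of a given line, but different trivial blocking members could use different lines — still disjoint, still fine. So I should bound more carefully: actually any two disjoint blocking sets $B_1, B_2$ — can both be trivial? If $B_1 \supseteq \ell_1(\F_q)$ and $B_2 \supseteq \ell_2(\F_q)$ with $B_1 \cap B_2 = \emptyset$, then $\ell_1 \neq \ell_2$ and $\ell_1 \cap \ell_2$ is a point in both, contradiction. Hence \emph{at most one} member of the pencil is a trivially blocking curve.

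So $k_1 \leq 1$, and the remaining $k - k_1 \geq k-1$ blocking members are each nontrivial, hence each has $\geq q + \sqrt q + 1$ points. Counting points in the disjoint parts:
$$
(k-1)(q + \sqrt q + 1) \leq q^2 + q + 1,
$$
which rearranges to $k - 1 \leq \frac{q^2+q+1}{q+\sqrt q+1}$. One checks $\frac{q^2+q+1}{q+\sqrt q+1} < q - \sqrt q + 1$ (indeed $(q+\sqrt q+1)(q-\sqrt q+1) = q^2+q+1$ exactly, a nice identity — so the bound is $k-1 \leq q-\sqrt q+1$... hmm that gives $k \leq q - \sqrt q + 2$, an \emph{upper} bound on blocking members). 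That translates to: the number of nonblocking members is $\geq (q+1) - (q-\sqrt q+2) = \sqrt q - 1$, which is off by one. To recover the clean $\sqrt q$ I would sharpen by noting the inequality $(k-1)(q+\sqrt q+1) \le q^2+q+1$ is strict unless equality holds throughout, which forces all parts nonempty and all nontrivial blocking sets to meet the Bruen bound with equality and to partition the plane — a configuration one can rule out (e.g., equality in Bruen forces Baer-subplane-type structure, and $q - \sqrt q + 1$ disjoint Baer subplanes plus one line cannot tile $\bP^2(\F_q)$ for parity/counting reasons), giving strict inequality and hence $k - 1 \leq q - \sqrt q$, i.e. at most $q - \sqrt q + 1$ blocking members and therefore at least $\sqrt q$ nonblocking ones.

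The main obstacle is this last boundary-case analysis: getting the constant exactly $\sqrt q$ rather than $\sqrt q - 1$ requires either citing the precise extremal classification behind Bruen's bound or a clever counting argument to exclude the equality configuration. Everything else — the disjointness of the parts, the "at most one trivial blocking member" observation, and the arithmetic identity $(q+\sqrt q+1)(q-\sqrt q+1) = q^2+q+1$ — is routine. I expect the paper handles the extremal case by a short direct argument, since \cite{BE86} already establishes the sharp statement.
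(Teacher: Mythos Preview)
Your overall strategy matches the paper's: partition $\bP^2(\F_q)$ into the $q+1$ member-sets, apply Bruen's bound $|B|\ge q+\sqrt{q}+1$ to the nontrivial blocking members, and use the identity $(q+\sqrt{q}+1)(q-\sqrt{q}+1)=q^2+q+1$. The difference, and the reason you are off by one, is in how you handle the trivially blocking case.

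You prove that \emph{at most one} member can be trivially blocking (two distinct $\F_q$-lines meet), and then carry that one possible trivial member through the counting. But the right observation is stronger: if \emph{any} member $C$ is trivially blocking, say $L(\F_q)\subseteq C(\F_q)$ for some $\F_q$-line $L$, then every other member is disjoint from $L(\F_q)$ and therefore cannot be blocking at all. So in that case there are $q\ge\sqrt{q}$ nonblocking members, and you are done immediately. This is exactly how the paper opens its proof. After disposing of this case, \emph{every} blocking member is nontrivial, so Bruen's bound applies to all $m$ of them, giving $m(q+\sqrt{q}+1)\le q^2+q+1$, hence $m\le q-\sqrt{q}+1$ and $q+1-m\ge\sqrt{q}$ on the nose.

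In short, the ``boundary-case analysis'' you flag as an obstacle is unnecessary: once you replace ``$k_1\le 1$'' by ``if $k_1\ge 1$ then $k=1$'', the off-by-one disappears and no extremal classification is needed.
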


\begin{proof}
Let $\mathcal{L}=\langle F, G\rangle$ be a pencil of plane curves with no $\Fq$-points in its base locus. If $\mathcal{L}$ has an $\Fq$-member $C$ whose $\Fq$-rational points form a trivial blocking set, then there exists an $\Fq$-line $L$ such that $L(\Fq)\subseteq C(\Fq)$. In this case, the other $q$ members of $\mathcal{L}$ will be nonblocking by being disjoint from $L(\Fq)$. Thus, we may assume that each $\Fq$-member of the pencil is not trivially blocking. 

Suppose we have $m$ blocking and $q+1-m$ nonblocking $\Fq$-members of $\mathcal{L}$. Since a nontrivial blocking set has at least $q+\sqrt{q}+1$ points~\cite{B71}, the following inequality holds:
$$
m(q+\sqrt{q}+1) \leq  \sum_{\substack{C \text{ is } \Fq\text{-member of } \mathcal{L} \\ C\text{ is blocking}}} \# C(\Fq)\leq \sum_{C \text{ is } \Fq\text{-member of } \mathcal{L}} \# C(\Fq) = q^2+q+1.
$$
It follows that,
$$
m \leq \frac{q^2+q+1}{q+\sqrt{q}+1} = q-\sqrt{q}+1. 
$$
Thus, the number of nonblocking $\Fq$-members is $q+1-m\geq\sqrt{q}$, as desired. 
\end{proof}

As a complement to the previous proposition, we now present a quick proof of Theorem~\ref{thm:positive-proportion-counterex}, which guarantees the existence of a pencil with exactly $\sqrt{q}$ many nonblocking members. Recall that if $q$ is a square, a {\em Baer subplane} in $\bP^2(\F_q)$ is a subplane with size $q+\sqrt{q}+1$. It is well-known that Baer subplanes are blocking sets.

\begin{proof}[Proof of Theorem~\ref{thm:positive-proportion-counterex}] Since $q$ is a square, it is known that that $\bP^2(\F_q)$ can be partitioned into 
$$
\frac{q^2+q+1}{q+\sqrt{q}+1}=q-\sqrt{q}+1
$$
Baer subplanes \cite{H79}*{Theorem 4.3.6}. Now, Proposition~\ref{prop:combinatorial-reduction} implies that we can find a pencil of curves with exactly $\sqrt{q}$ many $\Fq$-members that are nonblocking.
\end{proof}

\begin{rem}\label{rem:disjoint-blocking-sets}
For each prime power $q$, one can construct $(1/3-o(1))q$ disjoint blocking sets in $\bP^2(\F_q)$ \cite{BMPS06}. By Proposition~\ref{prop:combinatorial-reduction}, there exists a pencil in which at least $1/3$ of its members are blocking, or equivalently, at most $2/3$ of its members are nonblocking. It is conjectured by Kriesell that $\bP^2(\F_q)$ can be partitioned into $\lfloor q/2 \rfloor$ blocking sets (he verified the conjecture for $q \leq 8$) \cite{BMPS06}*{Page 150}, so the best possible constant in Question~\ref{quest:positive-prop} is conjecturally $1/2$.
\end{rem}

Next, we prove Theorem~\ref{thm:low-degree-pencil}, which gives a positive answer to Question~\ref{quest:positive-prop} under certain conditions related to the characteristic of the finite field.

\begin{proof}[Proof of Theorem~\ref{thm:low-degree-pencil}]

If $C$ is an $\mathbb{F}_q$-member of the pencil such that $C(\mathbb{F}_q)$ forms a trivial blocking set, then the other $q$ members are nonblocking by the argument in the first paragraph of the proof of Proposition~\ref{prop:sqrt-lower-bound}. We may therefore assume that each $\mathbb{F}_q$-member is either nonblocking or nontrivially blocking.

We claim that each blocking curve in the pencil has at least $\frac{3}{2}(q+1)$ distinct $\mathbb{F}_q$-points. This proves the theorem, because the number of blocking curves in the pencil is at most $(q^2+q+1)/\frac{3}{2}(q+1)<\frac{2}{3}(q+1)$ by the same counting argument as in the proof of Proposition~\ref{prop:sqrt-lower-bound}.

When $q=p$, the claim follows from Blokhuis' theorem, which states that a nontrivial blocking set in $\bP^2(\F_p)$ has at least $\frac{3}{2}(p+1)$ points \cite{B94}. Next, assume that $d\leq p$. Suppose, to the contrary, that we have a blocking curve $H$ in the pencil with $|H(\F_q)|<\frac{3}{2}(q+1)$. Then $H(\mathbb{F}_q)$ contains a minimal blocking set $B$. By a result of Sz\H{o}nyi \cite{S97}*{Corollary 4.8}, each line intersects $B$ in $1$ modulo $p$ points. If we take a line $L$ that passes through two points of $B$, it must intersect $B$ in at least $p+1$ points. In particular, $|(L\cap H)(\mathbb{F}_q)|\geq |L(\F_q) \cap B)|\geq p+1$. This contradicts B\'ezout's theorem as the degree of the curve $H$ is $d\leq p$.    
\end{proof}

\begin{rem}
When $q$ is a square, we have seen that the number of nonblocking curves in a pencil must be at least $\sqrt{q}$, and this lower bound is sharp in Theorem~\ref{thm:positive-proportion-counterex}. When $q$ is not a square, we know each nontrivial blocking set has a size at least $q+1+cq^{2/3}$ \cite{BSS99}, so the same argument as in Proposition~\ref{prop:sqrt-lower-bound} can be adapted to show that there are at least $c'q^{2/3}$ many nonblocking $\Fq$-members in any pencil with no $\Fq$-points in its base locus. This can be seen by mimicking the same proof as in Proposition~\ref{prop:sqrt-lower-bound} to get the following upper bound on the number of blocking curves in the given pencil:
$$
\frac{q^2+q+1}{q+1+q^{2/3}} = \frac{q^2+q-q^{4/3}}{q+1+q^{2/3}} + \frac{q^{4/3}}{q+1+q^{2/3}} \approx q+1-q^{2/3}+\frac{q^{4/3}}{q+1+q^{2/3}} \approx q+1-q^{2/3}+q^{1/3}.
$$

More generally, if the size of the smallest nontrivial blocking set in $\bP^2(\F_q)$ is $q+1+c q^{\epsilon}$, then a similar argument shows that there are at least $c' q^{\epsilon}$ nonblocking $\mathbb{F}_q$-members in the pencil. The minimum size of a nontrivial blocking set in $\bP^2(\F_q)$ is believed to depend on $n$, where $q=p^n$ with $n\geq 2$. It is a folklore conjecture that the lower bound for the size of a nontrivial blocking set is $q+1+q/p^e$, where $e$ is the largest proper divisor of $n$. We refer to \cites{B96, LP00} for further discussion.
\end{rem} 

\begin{rem}\label{rem:generic}
Sz\H{o}nyi \cite{S97}*{Theorem 5.7} showed if $q=p^2$ with $p$ prime, a nontrivial blocking set in $\bP^2(\F_q)$ which does not contain Baer subplanes has size at least $3(q+1)/2$. Note that this result is analogous to Blokhuis' result over $\F_p$ with $p$ prime. Let us explain how this observation helps us answer Question~\ref{quest:positive-prop} affirmatively in the case $q=p^2$ for certain pencils.

We say that a pencil of curves over $\Fq$ is \emph{generic} if the number of singular members (defined over the algebraic closure $\overline{\mathbb{F}_q}$) is finite. The terminology arises from the fact that a generic line (over $\overline{\Fq}$) in the parameter space of plane curves of degree $d$ meets the discriminant hypersurface in finitely many points. It follows that a given pencil $\mathcal{L}$ is either generic or every $\overline{\Fq}$-member of $\mathcal{L}$ is a singular curve. Thus, non-generic pencils are extremely special. When $d=o(\sqrt{q})$, we can show that for any generic pencil, at least $(1/3-o(1))q$ members are nonblocking. We may assume that no curve in the pencil is trivially blocking, for otherwise, $q$ of the curves are nonblocking. For each generic pencil there are at most $3(d-1)^2=o(q)$ many singular curves by \cite{EH16}*{Proposition 7.4}. Note for each smooth curve in the pencil, by B\'ezout's theorem, it does not contain a Baer subplane (otherwise, there is a line that intersects at the curve with at least $\sqrt{q}+1>d$ points). Thus, if a smooth member in the pencil is blocking, it has a size of at least $3(q+1)/2$; therefore, our pencils have at most $2q/3$ smooth blocking curves. Hence, we have more than $q/3-3(d-1)^2=(1/3-o(1))q$ nonblocking curves in our pencil.
\end{rem}

\section{Proof of Theorem~\ref{thm:fixed-degree-pencil}} \label{sec:fixed-degree-case}

In this section, we will show that Question~\ref{quest:positive-prop} has a positive answer if the constant $c_0$ is allowed to depend on $d$. More precisely, we will prove Theorem~\ref{thm:fixed-degree-pencil}, which guarantees that at least $\frac{1}{d+1}$ fraction of the $\Fq$-members of a given pencil are nonblocking. 

As a preparation, we prove a lower bound on the number of $\Fq$-rational points on a blocking plane curve. The following lemma is implicitly contained in \cite{AGY23}*{Section 2.5}. For the sake of completeness, we present a self-contained proof.

\begin{lem}\label{lem:lower-bound-blocking-curve} Suppose $C$ is a plane curve of degree $d$ defined over $\Fq$. If $C(\Fq)$ is a nontrivial blocking set, then $\# C(\Fq) > q + \frac{q+\sqrt{q}}{d}$.
\end{lem}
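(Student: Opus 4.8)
The plan is to combine the Bézout bound on line–curve intersections with the classical theorem of Jamison and of Brouwer–Schrijver that a set of points meeting every affine line of $\mathbb{A}^2(\Fq)$ has at least $2q-1$ points, by letting a suitable $\Fq$-line play the role of the line at infinity. The statement is vacuous for $d\le 2$ (one checks directly that no curve of degree at most $2$ can have $C(\Fq)$ equal to a nontrivial blocking set), so I would assume $d\ge 3$.

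First I would record that, since $B:=C(\Fq)$ is a nontrivial blocking set, no $\Fq$-line is contained in $B$; hence no $\Fq$-line is a component of $C$, and Bézout gives $\#(\ell\cap B)\le d$ for every $\Fq$-line $\ell$. Write $n=\#B$ and choose an $\Fq$-line $\ell$ minimizing $s:=\#(\ell\cap B)$, so $1\le s\le d$; deleting $\ell$ produces the affine plane $\mathbb{A}^2(\Fq)=\bP^2(\Fq)\setminus\ell$, in which $B\setminus\ell$ is a set of $n-s$ points.

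If $s\ge 2$, every affine line already contains at least two points of $B$, at most one of which lies on $\ell$, so $B\setminus\ell$ meets every affine line; the Jamison–Brouwer–Schrijver bound gives $n-s\ge 2q-1$, hence $n\ge 2q+1$, which exceeds $q+\frac{q+\sqrt q}{d}$. In the remaining case $s=1$, write $\ell\cap B=\{Q\}$; then $B\setminus\{Q\}$ can miss an affine line $m$ only when $m$ meets $B$ solely at $Q$, and all such lines pass through $Q\in\ell$, so they form a single parallel class of $\mathbb{A}^2(\Fq)$. Let $r$ be the number of affine secant lines through $Q$; these $r$ lines cover $B\setminus\{Q\}$ while each carries at most $d-1$ of its points, so $r\ge\frac{n-1}{d-1}$, and at most $q-r$ affine lines are missed by $B\setminus\{Q\}$. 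Adjoining one point to each missed line yields a set of at most $(n-1)+(q-r)$ points meeting every affine line, so Jamison–Brouwer–Schrijver gives $(n-1)+(q-r)\ge 2q-1$, that is $n\ge q+r\ge q+\frac{n-1}{d-1}$; rearranging, $(d-2)n\ge (d-1)q-1$, i.e. $n\ge q+\frac{q-1}{d-2}$.

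To conclude from $n\ge q+\frac{q-1}{d-2}$ I would split on the size of $d$. When $d\le\sqrt q$, the inequality $d(1+\sqrt q)\le\sqrt q(1+\sqrt q)=q+\sqrt q<2(q+\sqrt q)$ rearranges precisely to $\frac{q-1}{d-2}>\frac{q+\sqrt q}{d}$, which gives the claim. When $d>\sqrt q$, I would instead invoke Bruen's theorem \cite{B71}, namely $n\ge q+\sqrt q+1$, and note that $q+\sqrt q+1>q+\frac{q+\sqrt q}{d}$ is equivalent to $d>\sqrt q$. The step I expect to require the most care is the analysis of the case $s=1$: one must verify carefully that the affine lines missed by $B\setminus\{Q\}$ constitute a single parallel class, and that repairing $B\setminus\{Q\}$ to a full affine blocking set costs at most $q-r$ extra points, with $r$ bounded below by $\frac{n-1}{d-1}$ through Bézout. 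The two concluding numerical estimates and the trivial cases $d\le 2$ are routine.
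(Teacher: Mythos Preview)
Your argument is correct, but it follows a genuinely different route from the paper's proof. The paper argues uniformly via the standard line-incidence identities: with $t_i$ the number of $\Fq$-lines meeting $C$ in exactly $i$ points, the three relations $\sum t_i=q^2+q+1$, $\sum i\,t_i=(q+1)N$, $\sum\binom{i}{2}t_i=\binom{N}{2}$ combine (using $t_i=0$ for $i>d$) to give
\[
q^2+q+1\ \le\ (q+1)N-\frac{N(N-1)}{d},
\]
and then Bruen's bound $N\ge q+\sqrt q+1$ is substituted into the quadratic term to force $N>q+\frac{q+\sqrt q}{d}$.

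By contrast, you pick a line of minimum intersection, pass to the affine plane, and invoke the Jamison--Brouwer--Schrijver lower bound for affine blocking sets; Bruen enters only as a fallback when $d>\sqrt q$. Your approach is more case-heavy, but it has two pleasant by-products: in the range $d\le\sqrt q$ it yields the sharper inequality $n\ge q+\frac{q-1}{d-2}$ (and even $n\ge 2q+1$ when every line is at least a bisecant) without appealing to Bruen at all, and it makes the geometry of the tangent/secant structure through a point visible. The paper's proof, on the other hand, is shorter, avoids any case split on $s$ or on $d$, and packages everything into a single quadratic inequality; its remark afterwards recovers essentially the same asymptotic strength $N\gtrsim\frac{d}{d-1}\,q$ that your $s=1$ analysis gives. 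One small organizational point: since for $d>\sqrt q$ you use Bruen directly and never need the $s$-analysis, it would read more cleanly to dispose of that range first and then run the Jamison argument only for $d\le\sqrt q$.
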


\begin{proof}
Let $t_i$ denote the number of $\Fq$-lines $L$ such that $C\cap L$ has exactly $i$ distinct $\Fq$-points. Let $N=\#C(\Fq)$ denote the number of $\Fq$-points of $C$. Since $C$ is nontrivially blocking, it cannot contain any $\Fq$-line as a component. Thus, $t_0=0$, and $t_i=0$ for $i>d$ by B\'ezout's theorem. Moreover, a standard double counting argument on point-line incidences (see, for example \cite{AGY23}*{Lemma 2.9}) leads to the following three identities:
\begin{align*}
\sum_{i=1}^{d} t_i = q^2+q+1, \quad\quad \sum_{i=1}^{d} i\cdot t_i = (q+1) N, \quad\quad \sum_{i=2}^{d} \binom{i}{2}\cdot t_i =\binom{N}{2}.
\end{align*}
By combining the first two identities, we obtain
\begin{equation}\label{ineq:incidence-1}
q^2+q+1 = (q+1)N - \sum_{i=2}^{d} (i-1)t_i,
\end{equation}
while the third identity implies
\begin{equation}\label{ineq:incidence-2}
\sum_{i=2}^d (i-1)t_i\ge \frac{N(N-1)}{d}.
\end{equation}
The inequalities \eqref{ineq:incidence-1} and \eqref{ineq:incidence-2} together yield,
\begin{equation}\label{ineq:incidence-3}
q^2+q+1 \leq (q+1)N - \frac{N(N-1)}{d}.
\end{equation}
Since $C(\Fq)$ is a \emph{nontrivial} blocking set, we have $N\geq q+\sqrt{q}+1$ by \cite{B71}. Now, the inequality \eqref{ineq:incidence-3} implies,
\begin{align*}
(q+1)N & \geq q^2+q+1 + \frac{(q+\sqrt{q}+1)(q+\sqrt{q})}{d} > (q+1)q + \frac{(q+1)(q+\sqrt{q})}{d}. 
\end{align*}
It follows that $N>q+\frac{q+\sqrt{q}}{d}$, as desired. \end{proof}

\begin{rem}
One can obtain a slightly stronger conclusion $N > \left(\frac{d}{d-1}\right)q\cdot\left(1+o(1)\right)$ by analyzing the inequality \eqref{ineq:incidence-3} more carefully. 
\end{rem}

We now proceed with the proof of Theorem~\ref{thm:fixed-degree-pencil} on the number of nonblocking curves in a pencil of fixed degree.

\begin{proof}[Proof of Theorem~\ref{thm:fixed-degree-pencil}] Given a pencil $\mathcal{L}$ with no $\Fq$-points in its base locus, suppose $m$ of the $\Fq$-members are blocking and $q+1-m$ of the $\Fq$-members are nonblocking. If any $\Fq$-member of $\mathcal{L}$ is trivially blocking, then applying the same argument as at the beginning of Proposition~\ref{prop:sqrt-lower-bound}, we see that the other $q$ members of the pencil are nonblocking. So, we may assume that all the blocking $\Fq$-members are nontrivially blocking.

By applying Lemma~\ref{lem:lower-bound-blocking-curve}, we obtain the following inequality:
$$
m \left(\frac{qd+q+\sqrt{q}}{d}\right) < \sum_{\substack{C \text{ is } \Fq\text{-member of } \mathcal{L} \\ C\text{ is blocking}}} \# C(\Fq)\leq \sum_{C \text{ is } \Fq\text{-member of } \mathcal{L}} \# C(\Fq) = q^2+q+1.
$$
Therefore, using the hypothesis $d \leq q$, we obtain
$$
m \leq \frac{(q^2+q+1)d}{qd+q+\sqrt{q}}=\frac{(q+1)(q+\frac{1}{q+1})d}{(d+1)(q+\frac{\sqrt{q}}{d+1})}\leq \frac{d}{d+1} (q+1).
$$
We conclude that the number of nonblocking $\Fq$-members of the pencil $\mathcal{L}$ is $q+1-m\geq \frac{q+1}{d+1}$. \end{proof}

\begin{rem} The hypothesis $d\leq q$ in Theorem~\ref{thm:fixed-degree-pencil} is natural. However, when $\sqrt{q}\leq d\leq q$, Theorem~\ref{thm:fixed-degree-pencil} only guarantees $\frac{q+1}{\sqrt{q}+1}=\sqrt{q}-1$ many nonblocking curves, which was already proved in Proposition~\ref{prop:sqrt-lower-bound}. For Theorem~\ref{thm:fixed-degree-pencil} to yield more refined results, we need $d<\sqrt{q}$. 

On the other hand, if $d$ is too small relative to $q$, namely, $d<q^{1/6}$, then we can refine Theorem~\ref{thm:fixed-degree-pencil} for generic pencils (as defined in Remark~\ref{rem:generic}). More precisely, when $q>d^6$, the lower bound in Theorem~\ref{thm:fixed-degree-pencil} can be improved significantly so that the answer to Question~\ref{quest:positive-prop} is positive with $c_0=1-o(1)$. This follows from the observation that the number of singular members in a generic pencil is at most $3(d-1)^2=o(q)$ as stated in Remark~\ref{rem:generic}, and from our earlier result that a smooth plane curve is not blocking whenever $q>d^6$ \cite{AGY23}*{Theorem 1.2}. There are also other refined sufficient conditions on nonblocking smooth curves in \cite{AGY23}.
\end{rem}

\section*{Acknowledgments}
The second author is supported by an NSERC Discovery grant. The authors would like to thank the anonymous referees for their valuable comments and suggestions, and in particular for pointing out the proof of Theorem~\ref{thm:low-degree-pencil}.

\bibliographystyle{abbrv}
\bibliography{biblio}

\end{document}